\documentclass[12pt,reqno]{amsart}

\usepackage[T1]{fontenc}
\usepackage[utf8]{inputenc}
\usepackage{lmodern}

\usepackage[a4paper,margin=1in]{geometry}
\usepackage{setspace}
\allowdisplaybreaks

\usepackage{amsmath,amssymb,amsthm,mathtools,amsfonts,bm}
\usepackage{mathrsfs}
\usepackage{esint}
\numberwithin{equation}{section}
\usepackage{graphicx}
\usepackage{subcaption}
\usepackage{booktabs}
\usepackage{multirow}
\usepackage{tabularx}
\usepackage{colortbl}
\usepackage{float}
\usepackage{comment}

\usepackage{tikz}
\usetikzlibrary{arrows.meta,calc,positioning}
\usepackage{pgfplots}
\pgfplotsset{compat=1.18}

\usepackage{enumitem}
\usepackage{lscape}

\usepackage{xcolor}
\usepackage{hyperref}
\hypersetup{
  colorlinks=true,
  linkcolor=blue!50!black,
  citecolor=blue!50!black,
  urlcolor=blue!50!black,
  pdfauthor={Behrooz Moosavi Ramezanzadeh},
  pdftitle={An asymptotic mean value characterization for the regularized p-Laplacian}
}
\usepackage[nameinlink,capitalise,noabbrev]{cleveref}

\usepackage{aliascnt}

\theoremstyle{plain}
\newtheorem{theorem}{Theorem}[section]

\newaliascnt{proposition}{theorem}
\newtheorem{proposition}[proposition]{Proposition}
\aliascntresetthe{proposition}

\newaliascnt{lemma}{theorem}
\newtheorem{lemma}[lemma]{Lemma}
\aliascntresetthe{lemma}

\newaliascnt{corollary}{theorem}
\newtheorem{corollary}[corollary]{Corollary}
\aliascntresetthe{corollary}

\theoremstyle{definition}

\newaliascnt{definition}{theorem}

\aliascntresetthe{definition}

\newaliascnt{example}{theorem}
\newtheorem{example}[example]{Example}
\aliascntresetthe{example}

\theoremstyle{remark}
\newaliascnt{remark}{theorem}
\newtheorem{remark}[remark]{Remark}
\aliascntresetthe{remark}

\newaliascnt{observation}{theorem}
\newtheorem{observation}[observation]{Observation}
\aliascntresetthe{observation}

\newaliascnt{claim}{theorem}

\aliascntresetthe{claim}

\crefname{theorem}{Theorem}{Theorems}
\Crefname{theorem}{Theorem}{Theorems}
\crefname{proposition}{Proposition}{Propositions}
\Crefname{proposition}{Proposition}{Propositions}
\crefname{lemma}{Lemma}{Lemmas}
\Crefname{lemma}{Lemma}{Lemmas}
\crefname{corollary}{Corollary}{Corollaries}
\Crefname{corollary}{Corollary}{Corollaries}
\crefname{definition}{Definition}{Definitions}
\Crefname{definition}{Definition}{Definitions}
\crefname{example}{Example}{Examples}
\Crefname{example}{Example}{Examples}
\crefname{remark}{Remark}{Remarks}
\Crefname{remark}{Remark}{Remarks}
\crefname{observation}{Observation}{Observations}
\Crefname{observation}{Observation}{Observations}
\crefname{claim}{Claim}{Claims}
\Crefname{claim}{Claim}{Claims}
\crefname{assumption}{Assumption}{Assumptions}
\Crefname{assumption}{Assumption}{Assumptions}

\DeclareMathOperator{\divgg}{div}

\DeclareMathOperator{\dist}{dist}

\newcommand{\R}{\mathbb{R}}

\newcommand{\talpha}{\widetilde\alpha}
\newcommand{\tbeta}{\widetilde\beta}
\newcommand{\Sp}{S^+}
\newcommand{\Sm}{S^-}
\newcommand{\rhoeps}{\rho_\varepsilon}
\newcommand{\DeltaiR}{\widetilde\Delta_\infty}
\newcommand{\Reg}{\mathcal{F}}
\newcommand{\Lp}{\mathcal L_p}

\title[Asymptotic mean value for the regularized
\texorpdfstring{$p$}{p}-Laplacian]%
{An asymptotic mean value characterization for the regularized
\texorpdfstring{$p$}{p}-Laplacian}

\author{Behrooz Moosavi Ramezanzadeh}

\thanks{University of Pittsburgh, Pittsburgh, PA, USA.
\texttt{behroozmoosavi@pitt.edu}}

\date{\today}

\begin{document}

\begin{abstract}
We characterize solutions of the regularized $p$-Laplace equation
\[
  \divgg\bigl((1+|Dv|^2)^{p/2-1}Dv\bigr)=0,
  \qquad 1<p<\infty,
\]
in an open set $\Omega\subset\R^n$ by a pointwise asymptotic mean value
identity. For $v\in C^2(\Omega)$, solving the equation is
equivalent to
\[
  v(x)
  =
  \frac{\talpha}{2}
  \bigl(\Sp[v](x)+\Sm[v](x)\bigr)
  +
  \tbeta
  \int_{B_\varepsilon(0)}
  v(x+h)\rhoeps(h)\,dh
  +
  o(\varepsilon^2),
\]
where
\[
  \talpha=\frac{p-2}{p+n+1},
  \qquad
  \tbeta=\frac{n+3}{p+n+1}.
\]
Here $\rhoeps$ is the semicircular marginal of normalized Lebesgue measure
on the $(n+1)$-dimensional ball, and $\Sp,\Sm$ are the tilted extremal
functionals arising from the affine lift $w(x,s)=v(x)+s$. The lifted
gradient $(Dv,1)$ never vanishes, so the extremal second-order expansion
is valid at every gradient regime. The characterization holds for the full
range $1<p<\infty$. By standard interior regularity for nondegenerate
regularized $p$-growth equations, weak solutions are smooth in
the interior; the weak and viscosity viewpoints for related quasilinear
$p$-Laplace equations are connected in \cite{JLM01}. The convergence of
the associated projected dynamic programming scheme is established in the
companion paper \cite{Ramezanzadeh2026}.
\end{abstract}

\maketitle

\noindent\textbf{Acknowledgement.}
I am deeply indebted to Professor Juan Manfredi for proposing this
problem, and for his guidance and valuable comments.

\section{Introduction}
\label{sec:intro}

The classical mean value property states that a function
$u\in C(\Omega)$, where $\Omega\subset\R^n$ is open, is harmonic if and
only if
\[
  u(x)=\fint_{B_\varepsilon(x)}u(y)\,dy
\]
for every $x\in\Omega$ and every
$\varepsilon<\dist(x,\partial\Omega)$. For the standard $p$-Laplacian,
$1<p\le\infty$, Manfredi, Parviainen, and Rossi \cite{MPR10} proved the
asymptotic mean value formula
\[
  u(x)
  =
  \frac{\alpha}{2}
  \Bigl(
    \max_{\overline{B_\varepsilon(x)}}u
    +
    \min_{\overline{B_\varepsilon(x)}}u
  \Bigr)
  +
  \beta\fint_{B_\varepsilon(x)}u(y)\,dy
  +
  o(\varepsilon^2),
  \qquad
  \alpha=\frac{p-2}{p+n},
  \quad
  \beta=\frac{n+2}{p+n}.
\]
This formula is interpreted through smooth test functions, since
$p$-harmonic functions need not admit a pointwise second-order Taylor
expansion. Related discrete notions of $(p,\varepsilon)$-harmonious
functions were developed in \cite{MPR12,LPS14}. The equivalence between
weak and viscosity formulations for related quasilinear $p$-Laplace
equations is treated in \cite{JLM01}.

This paper concerns the regularized equation
\begin{equation}\label{eq:reg-pLap}
  \divgg\bigl((1+|Dv|^2)^{p/2-1}Dv\bigr)=0
  \qquad\text{in }\Omega,
  \qquad 1<p<\infty.
\end{equation}
For $v\in C^2(\Omega)$, a direct computation gives
\begin{equation}\label{eq:decomp-intro}
  \divgg\bigl((1+|Dv|^2)^{p/2-1}Dv\bigr)
  =
  (1+|Dv|^2)^{p/2-1}
  \left(
    \Delta v
    +
    (p-2)
    \frac{\langle D^2v\,Dv,Dv\rangle}{1+|Dv|^2}
  \right).
\end{equation}
Thus the normalized equation is
\[
  \Delta v
  +
  (p-2)
  \frac{\langle D^2v\,Dv,Dv\rangle}{1+|Dv|^2}
  =
  0.
\]
The coefficient matrix
\[
  I+(p-2)\frac{\xi\otimes\xi}{1+|\xi|^2}
\]
has eigenvalues in
\[
  [\min(1,p-1),\max(1,p-1)],
\]
uniformly in $\xi$. Hence the normalized equation is uniformly elliptic
for every finite $p>1$. By classical regularity theory for nondegenerate
quasilinear equations with regularized $p$-growth
\cite{Lieberman88,GilbargTrudinger}, continuous weak solutions are smooth
in the interior.

The denominator $1+|Dv|^2$ is generated by the affine lift
\[
  w(x,s)=v(x)+s.
\]
Indeed,
\[
  Dw=(Dv,1),
  \qquad
  |Dw|^2=1+|Dv|^2.
\]
Projecting the $(n+1)$-dimensional ball back to $\R^n$ changes both the
averaging term and the extremal terms. The average becomes an integral
with respect to
\[
  \rhoeps(h)
  :=
  \frac{2\sqrt{\varepsilon^2-|h|^2}}{|B_\varepsilon^{n+1}|},
  \qquad
  h\in B_\varepsilon(0)\subset\R^n,
\]
the marginal density of normalized Lebesgue measure on
$B_\varepsilon^{n+1}(0)$. The maximum and minimum become
\[
  \Sp[u](x)
  :=
  \sup_{|h|\le\varepsilon}
  \left(
    u(x+h)+\sqrt{\varepsilon^2-|h|^2}
  \right),
\]
and
\[
  \Sm[u](x)
  :=
  \inf_{|h|\le\varepsilon}
  \left(
    u(x+h)-\sqrt{\varepsilon^2-|h|^2}
  \right).
\]
Equivalently, $\Sp[u](x)$ and $\Sm[u](x)$ are the maximum and minimum of
\[
  (h,t)\longmapsto u(x+h)+t
\]
over the ball $B_\varepsilon^{n+1}(0)$.

The main result states that, for every $1<p<\infty$, a function
$v\in C(\Omega)$ solves \eqref{eq:reg-pLap} if and only if, for every
$x\in\Omega$,
\begin{equation}\label{eq:intro-amv}
  v(x)
  =
  \frac{\talpha}{2}
  \bigl(\Sp[v](x)+\Sm[v](x)\bigr)
  +
  \tbeta
  \int_{B_\varepsilon(0)}
  v(x+h)\rhoeps(h)\,dh
  +
  o(\varepsilon^2),
\end{equation}
where
\[
  \talpha=\frac{p-2}{p+n+1},
  \qquad
  \tbeta=\frac{n+3}{p+n+1}.
\]
The key point is that the lifted test function
\[
  \Phi_x(h,t)=\phi(x+h)+t
\]
has gradient
\[
  D\Phi_x(0,0)=(D\phi(x),1),
\]
which never vanishes. Therefore the extremal expansion needs no separate
case at points where $D\phi(x)=0$.

The projected operator originates in the projected tug-of-war construction
studied in \cite{Ramezanzadeh2026}. That work develops the discrete foundations
of the scheme and proves convergence of the projected dynamic programming
solutions. The present paper proves the local analytic characterization.

\medskip
\noindent\textbf{Organization.}
\Cref{sec:char} introduces the normalized operator, the projected
ingredients, and the main theorem. \Cref{sec:proof} proves the averaging
and extremal expansions and combines them. \Cref{sec:endpoint} gives the
endpoint example. \Cref{sec:concl} gives concluding remarks.

\medskip
\noindent\textbf{Summary of results.}
\begin{itemize}[leftmargin=1.6em]
\item The main result (\cref{thm:main}) is a pointwise asymptotic mean
value characterization of \eqref{eq:reg-pLap}, valid for every
$1<p<\infty$.
\item The identity comes from the affine lift $w(x,s)=v(x)+s$. Because the
lifted gradient $(Dv,1)$ never vanishes, the extremal expansion is the same
at critical and noncritical points, and no case distinction in $p$ is
needed.
\item The two ingredients of the operator are described explicitly: the
semicircular kernel $\rhoeps$ and the tilted functionals $\Sp,\Sm$. The
averaging term is handled by a projected Pizzetti expansion
(\cref{obs:pizzetti}), of which only the second-order term enters the
proof.
\item \Cref{ex:aronsson} records an endpoint example: the formal
$p\to\infty$ identity does not hold pointwise for a nonsmooth
infinity-harmonic function.
\end{itemize}
\section{The regularized operator and the main characterization}
\label{sec:char}

\subsection{The normalized equation}
\label{subsec:operator}

For $v\in C^2(\Omega)$ and $1<p<\infty$, set
\[
  \Reg_p(v)
  :=
  \operatorname{div}
  \bigl((1+|Dv|^2)^{p/2-1}Dv\bigr).
\]
A direct computation gives
\begin{equation}\label{eq:decomp}
  \Reg_p(v)
  =
  (1+|Dv|^2)^{p/2-1}
  \bigl(\Delta v+(p-2)\DeltaiR v\bigr),
\end{equation}
where
\[
  \DeltaiR v
  :=
  \frac{\langle D^2v\,Dv,Dv\rangle}{1+|Dv|^2}.
\]
Since
\[
  (1+|Dv|^2)^{p/2-1}>0,
\]
we have
\[
  \Reg_p(v)=0
  \quad\Longleftrightarrow\quad
  \Delta v+(p-2)\DeltaiR v=0.
\]

Define
\begin{equation}\label{eq:L-operator}
  \Lp\phi
  :=
  \Delta\phi
  +(p-2)
  \frac{\langle D^2\phi\,D\phi,D\phi\rangle}{1+|D\phi|^2}.
\end{equation}
Then
\[
  \Reg_p(\phi)
  =
  (1+|D\phi|^2)^{p/2-1}\Lp\phi .
\]
In coordinates,
\[
  \Lp\phi
  =
  a_{ij}(D\phi)\phi_{ij},
\]
where
\[
  a_{ij}(\xi)
  :=
  \delta_{ij}
  +(p-2)\frac{\xi_i\xi_j}{1+|\xi|^2}.
\]
For every $\eta\in\R^n$,
\[
  a_{ij}(\xi)\eta_i\eta_j
  =
  |\eta|^2
  +
  (p-2)\frac{\langle \xi,\eta\rangle^2}{1+|\xi|^2}.
\]
Hence
\[
  \min(1,p-1)|\eta|^2
  \le
  a_{ij}(\xi)\eta_i\eta_j
  \le
  \max(1,p-1)|\eta|^2.
\]
Thus $\Lp$ is uniformly elliptic for every finite $1<p<\infty$.

We shall use the standard fact that continuous weak solutions of the
regularized equation are classical. More precisely, if
\[
  v\in W^{1,p}_{\mathrm{loc}}(\Omega)
\]
satisfies
\begin{equation}\label{eq:weak-form}
  \int_\Omega
  (1+|Dv|^2)^{(p-2)/2}
  \langle Dv,D\varphi\rangle\,dx
  =
  0
  \qquad
  \text{for every }\varphi\in C_c^\infty(\Omega),
\end{equation}
then
\[
  v\in C^\infty(\Omega),
  \qquad
  \Reg_p(v)=0
  \quad\text{classically in }\Omega .
\]
See \cite{Lieberman88,GilbargTrudinger}. For the weak--viscosity
equivalence in the related degenerate $p$-Laplace setting, see
\cite{JLM01}.

\subsection{The projected ingredients}
\label{subsec:scheme}

Let $\varepsilon>0$. Define
\begin{equation}\label{eq:weights}
  \talpha=\frac{p-2}{p+n+1},
  \qquad
  \tbeta=\frac{n+3}{p+n+1}.
\end{equation}
Then
\[
  \talpha+\tbeta=1.
\]

The projected averaging kernel is
\begin{equation}\label{eq:rho}
  \rhoeps(h)
  :=
  \frac{2\sqrt{\varepsilon^2-|h|^2}}{|B_\varepsilon^{n+1}|},
  \qquad
  h\in B_\varepsilon(0)\subset\R^n.
\end{equation}
It is the marginal density of normalized Lebesgue measure on
$B_\varepsilon^{n+1}(0)\subset\R^{n+1}$.

For a function $u$ defined on a neighborhood of
$\overline{B_\varepsilon(x)}$, define
\begin{align}
  \Sp[u](x)
  &:=
  \sup_{|h|\le\varepsilon}
  \left(
    u(x+h)+\sqrt{\varepsilon^2-|h|^2}
  \right),
  \label{eq:Splus}\\
  \Sm[u](x)
  &:=
  \inf_{|h|\le\varepsilon}
  \left(
    u(x+h)-\sqrt{\varepsilon^2-|h|^2}
  \right).
  \label{eq:Sminus}
\end{align}
The projected operator is
\begin{equation}\label{eq:proj-DPP}
  T_\varepsilon u(x)
  :=
  \frac{\talpha}{2}
  \bigl(\Sp[u](x)+\Sm[u](x)\bigr)
  +
  \tbeta
  \int_{B_\varepsilon(0)}
  u(x+h)\rhoeps(h)\,dh.
\end{equation}

These quantities come from the affine lift
\[
  w(x,s)=u(x)+s.
\]
For fixed $x$, set
\[
  \Phi_x(h,t):=u(x+h)+t.
\]
Then
\[
  \Sp[u](x)
  =
  \max_{\overline{B_\varepsilon^{n+1}(0)}}\Phi_x,
  \qquad
  \Sm[u](x)
  =
  \min_{\overline{B_\varepsilon^{n+1}(0)}}\Phi_x.
\]
Moreover, the average of $\Phi_x$ over $B_\varepsilon^{n+1}(0)$ projects
to the $\rhoeps$-average of $u$ in the $h$-variables.

\subsection{The main theorem}
\label{subsec:main-theorem}

\begin{theorem}[Pointwise asymptotic characterization]
\label{thm:main}
Let $\Omega\subset\R^n$ be open, let $1<p<\infty$, and let
$v\in C^2(\Omega)$. The following are equivalent:
\begin{enumerate}[label=\textup{(\roman*)}]
\item $\Reg_p(v)=0$ in $\Omega$;
\item for every $x\in\Omega$,
\begin{equation}\label{eq:amv}
  v(x)
  =
  \frac{\talpha}{2}
  \bigl(\Sp[v](x)+\Sm[v](x)\bigr)
  +
  \tbeta
  \int_{B_\varepsilon(0)}
  v(x+h)\rhoeps(h)\,dh
  +
  o(\varepsilon^2)
  \qquad\text{as }\varepsilon\to0.
\end{equation}
\end{enumerate}
Equivalently, for every $x\in\Omega$,
\begin{equation}\label{eq:main-consistency}
  T_\varepsilon v(x)-v(x)
  =
  \frac{\varepsilon^2}{2(p+n+1)}
  \Lp v(x)
  +
  o(\varepsilon^2)
  \qquad\text{as }\varepsilon\to0.
\end{equation}
The same conclusion holds for continuous weak solutions of
\eqref{eq:weak-form}, by the regularity statement above.
\end{theorem}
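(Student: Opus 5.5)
The plan is to prove the consistency expansion \eqref{eq:main-consistency} directly for every $v\in C^2(\Omega)$ and every $x\in\Omega$; the equivalence (i)$\Leftrightarrow$(ii) then follows. Indeed, \eqref{eq:amv} holds at $x$ if and only if $\Lp v(x)=0$, because $\varepsilon^2\Lp v(x)/(2(p+n+1))$ is $o(\varepsilon^2)$ only when $\Lp v(x)=0$. Moreover, $\Lp v=0$ is equivalent to $\Reg_p(v)=0$ by \eqref{eq:decomp} and positivity of the prefactor. The statement about weak solutions reduces to this via the regularity fact quoted after \eqref{eq:weak-form}. Since $\talpha+\tbeta=1$, I will expand each ingredient separately and then combine them.

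\emph{Averaging term.} Lift to $\Phi_x(h,t)=v(x+h)+t$ on $B^{n+1}_\varepsilon(0)$. By construction of $\rhoeps$ as a marginal, $\int_{B_\varepsilon(0)}v(x+h)\rhoeps(h)\,dh=\fint_{B^{n+1}_\varepsilon}\Phi_x$. Taylor expand $\Phi_x$ at the origin to second order with a uniform $o(\varepsilon^2)$ remainder coming from continuity of $D^2v$. The odd terms vanish by symmetry, and $\fint_{B^{n+1}_\varepsilon}y_iy_j=\varepsilon^2\delta_{ij}/(n+3)$. This gives
\[
  \int_{B_\varepsilon(0)} v(x+h)\rhoeps(h)\,dh=v(x)+\frac{\varepsilon^2}{2(n+3)}\Delta v(x)+o(\varepsilon^2),
\]
since $\Phi_x$ has no second derivatives in $t$.

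\emph{Extremal terms.} This is the main step. Set $\xi=D\Phi_x(0)=(Dv(x),1)\neq0$ and $\nu=\xi/|\xi|$. For the lower bound on $\Sp[v](x)$, evaluate at $\varepsilon\nu$:
\[
  \Sp\ge \Phi_x(\varepsilon\nu)=v(x)+\varepsilon|\xi|+\tfrac{\varepsilon^2}{2}\langle D^2\Phi_x\nu,\nu\rangle+o(\varepsilon^2).
\]
Similarly, $\Sm\le \Phi_x(-\varepsilon\nu)$, which has the same second-order term with the opposite first-order term.

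For the matching upper bound, let $y_\varepsilon$ be a maximizer over $\overline{B^{n+1}_\varepsilon}$. Because $\xi\ne0$, the linear term dominates, so $y_\varepsilon$ lies on the sphere and $y_\varepsilon/\varepsilon\to\nu$. This is the key use of the lift, and it requires no case distinction at $Dv(x)=0$. Writing $y_\varepsilon=\varepsilon z_\varepsilon$ with $|z_\varepsilon|=1$, we have
\[
  \Phi_x(y_\varepsilon)=v(x)+\varepsilon\langle\xi,z_\varepsilon\rangle+\tfrac{\varepsilon^2}{2}\langle D^2\Phi_x z_\varepsilon,z_\varepsilon\rangle+o(\varepsilon^2).
\]
Use $\langle\xi,z_\varepsilon\rangle\le|\xi|$, and note that $\langle D^2\Phi_x z_\varepsilon,z_\varepsilon\rangle\to\langle D^2\Phi_x\nu,\nu\rangle$ by the convergence $z_\varepsilon\to\nu$. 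Hence $\Sp$ equals the lower bound up to $o(\varepsilon^2)$, and likewise for $\Sm$. Adding the two, the first-order terms cancel:
\[
  \tfrac12(\Sp+\Sm)=v(x)+\tfrac{\varepsilon^2}{2}\langle D^2\Phi_x\nu,\nu\rangle+o(\varepsilon^2)=v(x)+\frac{\varepsilon^2}{2}\frac{\langle D^2v\,Dv,Dv\rangle}{1+|Dv|^2}+o(\varepsilon^2).
\]
Here I use that $D^2\Phi_x$ is $D^2v(x)$ padded with zeros in the $t$ direction.

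\emph{Combination.} Combining the two expansions,
\[
  T_\varepsilon v(x)-v(x)=\frac{\varepsilon^2}{2}\Big(\frac{p-2}{p+n+1}\DeltaiR v+\frac{n+3}{p+n+1}\cdot\frac{1}{n+3}\Delta v\Big)+o(\varepsilon^2),
\]
which is exactly \eqref{eq:main-consistency}. The delicate point is the uniformity of the Taylor remainder in the extremal step. I will handle it with the modulus of continuity of $D^2v$ on a fixed compact neighbourhood of $x$, which suffices because only pointwise-in-$x$ asymptotics are claimed.
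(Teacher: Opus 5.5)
Your proposal is correct and follows the paper's proof. You lift to $\Phi_x(h,t)=v(x+h)+t$ and get the averaging expansion with coefficient $1/(2(n+3))$ from the second moments of $B^{n+1}_\varepsilon$, which is the paper's projected Pizzetti observation with $k=1$. You get the tilted extremal expansion from the nonvanishing lifted gradient $(Dv(x),1)$, combine using $\talpha+\tbeta=1$, and divide by $\varepsilon^2$ for the equivalence. The only variation is how you close the max--min expansion: a squeeze (test point $\pm\varepsilon\nu$ for one bound, Cauchy--Schwarz plus convergence of the extremal direction for the other) instead of the constrained critical point equation. That argument is valid and slightly more elementary.
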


For $p=2$ one has
\[
  \talpha=0,
  \qquad
  \tbeta=1.
\]
Thus \eqref{eq:amv} reduces to
\[
  v(x)
  =
  \int_{B_\varepsilon(0)}
  v(x+h)\rhoeps(h)\,dh
  +
  o(\varepsilon^2).
\]
The second-order coefficient of this projected average is
$1/(2(n+3))$, reflecting the fact that the average is the solid mean in
dimension $n+1$ applied to a lifted function independent of the last
variable.

Formally, as $p\to\infty$,
\[
  \talpha\to1,
  \qquad
  \tbeta\to0,
\]
and the projected identity becomes
\begin{equation}\label{eq:pinf-charact}
  v(x)
  =
  \frac12
  \bigl(\Sp[v](x)+\Sm[v](x)\bigr)
  +
  o(\varepsilon^2).
\end{equation}
For smooth functions this corresponds to
\[
  \DeltaiR v
  =
  \frac{\langle D^2v\,Dv,Dv\rangle}{1+|Dv|^2}
  =
  0.
\]

\section{Proof of the characterization theorem}
\label{sec:proof}

The proof combines two local expansions: a Pizzetti-type expansion for
the projected average, and a max--min expansion for the tilted extremal
terms.

\subsection{The averaging expansion}
\label{subsec:p2}

For $\phi\in C(U)$ define
\[
  A_\varepsilon\phi(x)
  :=
  \int_{B_\varepsilon(0)}
  \phi(x+h)\rhoeps(h)\,dh .
\]

\begin{lemma}[The projected average is a lifted ball mean]
\label{lem:avg-lift}
Let $U\subset\R^n$ be open, let $\phi\in C(U)$, and let
$x\in U$ with $\overline{B_\varepsilon(x)}\subset U$. Then
\[
  A_\varepsilon\phi(x)
  =
  \fint_{B_\varepsilon^{n+1}(0)}
  \phi(x+h)\,d(h,t).
\]
\end{lemma}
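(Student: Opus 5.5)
The plan is a direct Fubini computation on the $(n+1)$-dimensional ball. Write points of $B_\varepsilon^{n+1}(0)$ as $(h,t)$ with $h\in\R^n$ and $t\in\R$, so that $(h,t)\in B_\varepsilon^{n+1}(0)$ if and only if $|h|<\varepsilon$ and $|t|<\sqrt{\varepsilon^2-|h|^2}$. Since $\overline{B_\varepsilon(x)}\subset U$ and $\phi$ is continuous, the map $(h,t)\mapsto\phi(x+h)$ is bounded and continuous on $\overline{B_\varepsilon^{n+1}(0)}$. It is therefore integrable, and Fubini's theorem applies without further justification.

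First I will write the normalized integral as
\[
  \fint_{B_\varepsilon^{n+1}(0)}\phi(x+h)\,d(h,t)
  =
  \frac{1}{|B_\varepsilon^{n+1}|}
  \int_{B_\varepsilon(0)}
  \left(
    \int_{-\sqrt{\varepsilon^2-|h|^2}}^{\sqrt{\varepsilon^2-|h|^2}}
    dt
  \right)
  \phi(x+h)\,dh .
\]
Here I integrate first in $t$ for fixed $h$, using that the integrand does not depend on $t$. The inner integral equals $2\sqrt{\varepsilon^2-|h|^2}$, the length of the vertical chord through $h$. Substituting this gives exactly
\[
  \int_{B_\varepsilon(0)}\phi(x+h)\,
  \frac{2\sqrt{\varepsilon^2-|h|^2}}{|B_\varepsilon^{n+1}|}\,dh
  =
  \int_{B_\varepsilon(0)}\phi(x+h)\rhoeps(h)\,dh
  =
  A_\varepsilon\phi(x),
\]
by the definition \eqref{eq:rho} of $\rhoeps$. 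The boundary sphere $|h|=\varepsilon$ has measure zero, so it does not matter whether the balls are taken open or closed.

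As a sanity check, I will take $\phi\equiv1$. This confirms that $\rhoeps$ is a probability density, which is the statement that the marginal of normalized Lebesgue measure has total mass one.

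There is no real obstacle. The only point needing care is measurability and integrability for Fubini, and continuity of $\phi$ on the compact set $\overline{B_\varepsilon(x)}$ settles this.
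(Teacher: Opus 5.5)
Your proposal is correct and follows essentially the same route as the paper: slice the $(n+1)$-ball over each $h\in B_\varepsilon(0)$, integrate the $t$-independent integrand over the chord of length $2\sqrt{\varepsilon^2-|h|^2}$, and recognize $\rhoeps$ after dividing by $|B_\varepsilon^{n+1}|$. Your added remarks on integrability for Fubini and on the measure-zero boundary are harmless and slightly more careful than the paper's version.
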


\begin{proof}
For fixed $h\in B_\varepsilon(0)$,
\[
  \{t:(h,t)\in B_\varepsilon^{n+1}(0)\}
  =
  \left[
    -\sqrt{\varepsilon^2-|h|^2},
    \sqrt{\varepsilon^2-|h|^2}
  \right].
\]
Thus
\[
  \fint_{B_\varepsilon^{n+1}(0)}
  \phi(x+h)\,d(h,t)
  =
  \frac{1}{|B_\varepsilon^{n+1}|}
  \int_{B_\varepsilon(0)}
  \phi(x+h)\,2\sqrt{\varepsilon^2-|h|^2}\,dh.
\]
By the definition of $\rhoeps$, this is $A_\varepsilon\phi(x)$.
\end{proof}

\begin{observation}[Projected Pizzetti expansion]
\label{obs:pizzetti}
Let $U\subset\R^n$ be open, let $k\ge1$ be an integer, let
$\phi\in C^{2k}(U)$, and let $x\in U$ with
$\overline{B_\varepsilon(x)}\subset U$. Then
\begin{equation}\label{eq:pizzetti}
  A_\varepsilon\phi(x)
  =
  \sum_{j=0}^{k}
  c_{j,n}\varepsilon^{2j}\Delta^j\phi(x)
  +
  o(\varepsilon^{2k}),
\end{equation}
where
\[
  c_{0,n}=1,
\]
and, for $1\le j\le k$,
\begin{equation}\label{eq:pizzetti-coeff}
  c_{j,n}
  =
  \frac{1}
  {2^j j!(n+3)(n+5)\cdots(n+2j+1)}
  =
  \frac{1}{4^j j!}
  \frac{\Gamma\!\left(\frac{n+3}{2}\right)}
       {\Gamma\!\left(j+\frac{n+3}{2}\right)}.
\end{equation}
Only even powers of $\varepsilon$ occur.
\end{observation}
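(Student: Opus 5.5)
Via \cref{lem:avg-lift}, I will write $A_\varepsilon\phi(x)$ as the solid mean over $B_\varepsilon^{n+1}(0)$ of the lifted function $\Psi(h,t):=\phi(x+h)$, which does not depend on $t$. I then Taylor expand $\phi$ around $x$ to order $2k$, which is legitimate because $\phi\in C^{2k}(U)$:
\[
  \phi(x+h)=\sum_{|\gamma|\le 2k}\frac{D^\gamma\phi(x)}{\gamma!}h^\gamma+R_{2k}(h),
  \qquad |R_{2k}(h)|\le \omega(|h|)\,|h|^{2k},
\]
where $\omega$ is the modulus of continuity of $D^{2k}\phi$ on $\overline{B_\varepsilon(x)}$. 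Because $\rhoeps$ is a probability density supported in $B_\varepsilon(0)$, the remainder contributes at most $\omega(\varepsilon)\varepsilon^{2k}=o(\varepsilon^{2k})$. That leaves the task of averaging the monomials $h^\gamma$ against $\rhoeps$, which is the same as averaging them over $B_\varepsilon^{n+1}$.

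\textbf{Parity.} If $h^\gamma$ is odd in some coordinate, its average vanishes. So only even multi-indices survive, and in particular only even powers of $\varepsilon$ appear. For the surviving terms I will not compute each moment separately. Instead I will use the classical Pizzetti formula in dimension $N=n+1$ applied to the polynomial part $P(h)=\sum_{|\gamma|\le 2k}D^\gamma\phi(x)h^\gamma/\gamma!$, viewed as a function of $(h,t)$ that is independent of $t$. The solid-mean Pizzetti formula in $\R^N$, which is exact for polynomials, reads
\[
  \fint_{B^N_\varepsilon(0)}P
  =\sum_{j\ge0}\frac{\varepsilon^{2j}\,\Delta_N^jP(0)}{2^j j!\,(N+2)(N+4)\cdots(N+2j)}.
\]
Since $P$ does not depend on $t$, we have $\Delta_N^j P=\Delta^j P$. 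Moreover $\Delta^jP(0)=\Delta^j\phi(x)$ for $j\le k$, because $\Delta^j$ involves only derivatives of order $2j\le 2k$, which $P$ reproduces exactly at $0$. For $j>k$ the term $\Delta^jP(0)$ vanishes because $P$ has degree at most $2k$. Substituting $N=n+1$ gives the denominators $(n+3)(n+5)\cdots(n+2j+1)$. The Gamma-function form then follows from
\[
  (n+3)(n+5)\cdots(n+2j+1)=2^j\,\frac{\Gamma\!\left(j+\frac{n+3}{2}\right)}{\Gamma\!\left(\frac{n+3}{2}\right)}.
\]

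\textbf{Main obstacle.} The key step is justifying the polynomial Pizzetti formula in $\R^N$. My plan is to prove it first for the sphere mean and then integrate in the radius. Write $M(r)$ for the sphere mean of a polynomial $P$ over $\partial B_r$. Green's identity gives
\[
  M'(r)=\frac{r}{N}\,\fint_{B_r}\Delta P,
\]
so by induction on $j$,
\[
  M(r)=\sum_j \frac{r^{2j}\,\Delta^jP(0)}{2^j j!\,N(N+2)\cdots(N+2j-2)}.
\]
Averaging in $r$ with the weight $Nr^{N-1}/\varepsilon^N$ turns $r^{2j}$ into $\dfrac{N\varepsilon^{2j}}{N+2j}$, and this yields the stated denominators.

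As a sanity check, I will compare the case $j=1$ directly with the second moment $\fint_{B^N_\varepsilon}|h_i|^2=\varepsilon^2/(N+2)$, which gives $c_{1,n}=1/(2(n+3))$, matching the remark made after \cref{thm:main}.
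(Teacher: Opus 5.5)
Your proof is correct. It shares the Taylor-expansion, parity and remainder steps with the paper, but it evaluates the polynomial moments by a genuinely different route.

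The paper stays in $\R^n$ throughout. It uses the Fischer decomposition to reduce $\int_{B_\varepsilon(0)}P_{2j}\rho_\varepsilon\,dh$ to $\Delta^j\phi(x)$ times the radial moment $\int_{B_\varepsilon(0)}|h|^{2j}\rho_\varepsilon\,dh$, divided by $\Delta^j|h|^{2j}$. It then computes $\Delta^j|h|^{2j}$ by iterating $\Delta|h|^{2j}=2j(2j+n-2)|h|^{2j-2}$, and evaluates the $\rho_\varepsilon$-moments by a Beta integral. So the Gamma form of $c_{j,n}$ appears first and the product form is derived from it.

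You instead use \cref{lem:avg-lift} to pass to $\R^{n+1}$ and apply the solid Pizzetti formula in dimension $N=n+1$ to the $t$-independent Taylor polynomial. You reprove that formula for polynomials from the spherical-mean identity $M'(r)=\frac rN\fint_{B_r}\Delta P$ followed by radial integration. This is exactly the interpretation the paper records only afterwards, in the remark following the observation, without using it in the proof.

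What each route buys:
\begin{itemize}
\item Yours dispenses with the Fischer decomposition and with every moment computation for the semicircular weight. It also explains structurally why the denominators are $(N+2)(N+4)\cdots(N+2j)$ with $N=n+1$, with the Gamma form following in one line.
\item The paper's keeps everything in the projected variables and exhibits the moments of $\rho_\varepsilon$ explicitly. That is what one would reuse for a radial kernel that is not the marginal of a ball.
\end{itemize}

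Two cosmetic points. The induction behind your spherical formula is most cleanly phrased as induction on $\deg P$, since the hypothesis is applied to $\Delta P$ inside the ball mean. The Taylor remainder bound should carry a constant $C_{n,k}$ and use the modulus of continuity of $D^{2k}\phi$ on a fixed ball $\overline{B_{\varepsilon_0}(x)}$. Neither affects the conclusion.
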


\begin{proof}
Taylor expansion gives
\[
  \phi(x+h)
  =
  \sum_{m=0}^{2k}P_m(h)+r(x,h),
\]
where
\[
  P_m(h)
  :=
  \frac1{m!}
  \sum_{i_1,\dots,i_m=1}^{n}
  \phi_{i_1\cdots i_m}(x)
  h_{i_1}\cdots h_{i_m},
\]
and
\[
  r(x,h)=o(|h|^{2k}).
\]
Integrating against $\rhoeps$ gives
\[
  A_\varepsilon\phi(x)
  =
  \sum_{m=0}^{2k}
  \int_{B_\varepsilon(0)}P_m(h)\rhoeps(h)\,dh
  +
  \int_{B_\varepsilon(0)}r(x,h)\rhoeps(h)\,dh.
\]
If $m$ is odd, then every monomial of degree $m$ has at least one
coordinate with odd exponent. Since $B_\varepsilon(0)$ is symmetric and
$\rhoeps$ is radial,
\[
  \int_{B_\varepsilon(0)}P_m(h)\rhoeps(h)\,dh=0.
\]
Thus
\[
  A_\varepsilon\phi(x)
  =
  \sum_{j=0}^{k}
  \int_{B_\varepsilon(0)}P_{2j}(h)\rhoeps(h)\,dh
  +
  \int_{B_\varepsilon(0)}r(x,h)\rhoeps(h)\,dh.
\]

For a homogeneous polynomial $P$ of degree $2j$, radial averaging gives
\begin{equation}\label{eq:radial-average}
  \int_{B_\varepsilon(0)}P(h)\rhoeps(h)\,dh
  =
  \frac{\Delta^jP}{\Delta^j|h|^{2j}}
  \int_{B_\varepsilon(0)}|h|^{2j}\rhoeps(h)\,dh.
\end{equation}
Indeed, in the Fischer decomposition
\[
  P(h)=\sum_{i=0}^{j}|h|^{2i}H_{2j-2i}(h),
\]
where $H_{2j-2i}$ is harmonic and homogeneous of degree $2j-2i$, all
terms with positive harmonic degree have zero spherical mean. Hence only
the radial term survives under radial integration. Applying $\Delta^j$
also isolates the same radial term.

Applying \eqref{eq:radial-average} to $P=P_{2j}$ gives
\[
  \int_{B_\varepsilon(0)}P_{2j}(h)\rhoeps(h)\,dh
  =
  \Delta^j\phi(x)
  \frac{
    \displaystyle
    \int_{B_\varepsilon(0)}|h|^{2j}\rhoeps(h)\,dh
  }{
    \displaystyle
    \Delta^j|h|^{2j}
  }.
\]
Now
\[
  \Delta |h|^{2j}
  =
  2j(2j+n-2)|h|^{2j-2}.
\]
Repeating this identity,
\begin{equation}\label{eq:lap-power}
  \Delta^j|h|^{2j}
  =
  4^j j!
  \frac{\Gamma\!\left(j+\frac n2\right)}
       {\Gamma\!\left(\frac n2\right)}.
\end{equation}
Also,
\[
  \int_{B_\varepsilon(0)}|h|^{2j}\rhoeps(h)\,dh
  =
  \frac{2}{|B_\varepsilon^{n+1}|}
  \int_{B_\varepsilon(0)}
  |h|^{2j}\sqrt{\varepsilon^2-|h|^2}\,dh.
\]
Radial integration in $\R^n$ and the beta integral give
\begin{equation}\label{eq:moment-power}
  \int_{B_\varepsilon(0)}|h|^{2j}\rhoeps(h)\,dh
  =
  \varepsilon^{2j}
  \frac{
    \Gamma\!\left(\frac{n+3}{2}\right)
    \Gamma\!\left(j+\frac n2\right)
  }{
    \Gamma\!\left(\frac n2\right)
    \Gamma\!\left(j+\frac{n+3}{2}\right)
  }.
\end{equation}
Dividing \eqref{eq:moment-power} by \eqref{eq:lap-power} gives
\[
  \int_{B_\varepsilon(0)}P_{2j}(h)\rhoeps(h)\,dh
  =
  \varepsilon^{2j}
  \frac{1}{4^j j!}
  \frac{\Gamma\!\left(\frac{n+3}{2}\right)}
       {\Gamma\!\left(j+\frac{n+3}{2}\right)}
  \Delta^j\phi(x).
\]
This is the Gamma form of \eqref{eq:pizzetti-coeff}. The product form
follows from
\[
  \Gamma\!\left(j+\frac{n+3}{2}\right)
  =
  \Gamma\!\left(\frac{n+3}{2}\right)
  \prod_{q=0}^{j-1}
  \left(\frac{n+3}{2}+q\right).
\]
The remainder is $o(\varepsilon^{2k})$ because
\[
  r(x,h)=o(|h|^{2k})
\]
and
\[
  \int_{B_\varepsilon(0)}|h|^{2k}\rhoeps(h)\,dh
  =
  O(\varepsilon^{2k}).
\]
This proves \eqref{eq:pizzetti}.
\end{proof}

\begin{remark}
By \cref{lem:avg-lift}, $A_\varepsilon\phi(x)$ is the solid ball mean in
dimension $n+1$ of the function $(h,t)\mapsto\phi(x+h)$, which is
independent of $t$. Thus \cref{obs:pizzetti} is the classical solid
Pizzetti expansion in dimension $n+1$, written after projection onto the
$h$-variables.
\end{remark}

\begin{proposition}[Second-order averaging expansion]
\label{prop:p2}
Let $U\subset\R^n$ be open, let $\phi\in C^2(U)$, and let
$x\in U$ with $\overline{B_\varepsilon(x)}\subset U$. Then
\begin{equation}\label{eq:p2-expand}
  \int_{B_\varepsilon(0)}
  \phi(x+h)\rhoeps(h)\,dh
  =
  \phi(x)
  +
  \frac{\varepsilon^2}{2(n+3)}
  \Delta\phi(x)
  +
  o(\varepsilon^2).
\end{equation}
\end{proposition}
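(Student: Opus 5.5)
This is the case $k=1$ of \cref{obs:pizzetti}, where $c_{1,n}=1/(2(n+3))$. Since the statement only assumes $\phi\in C^2(U)$, I would give the short direct argument instead of relying on the Fischer decomposition.

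\textbf{Taylor expansion.} I start from the second-order Taylor expansion at $x$:
\[
  \phi(x+h)=\phi(x)+\langle D\phi(x),h\rangle+\tfrac12\langle D^2\phi(x)h,h\rangle+r(h),
  \qquad |r(h)|\le \omega(|h|)\,|h|^2,
\]
where $\omega$ is the modulus of continuity of $D^2\phi$ on $\overline{B_\varepsilon(x)}$. Because $\overline{B_\varepsilon(x)}\subset U$ is compact, $\omega(s)\to0$ as $s\to0$.

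\textbf{Integrating term by term.} I integrate each term against $\rhoeps$.
\begin{itemize}
\item The constant term returns $\phi(x)$, because $\rhoeps$ is a probability density. This is \cref{lem:avg-lift} applied to $\phi\equiv1$.
\item The linear term vanishes, because $\rhoeps$ is radial and hence even.
\item For the quadratic term, radial symmetry gives $\int h_ih_j\rhoeps\,dh=\delta_{ij}\frac1n\int|h|^2\rhoeps\,dh$. The quadratic term therefore contributes $\frac{1}{2n}\Delta\phi(x)\int|h|^2\rhoeps\,dh$.
\end{itemize}

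\textbf{The second moment.} The quickest route is \cref{lem:avg-lift}. It gives
\[
  \int|h|^2\rhoeps\,dh=\fint_{B^{n+1}_\varepsilon}|h|^2\,d(h,t)=\frac{n}{n+1}\fint_{B^{n+1}_\varepsilon}|(h,t)|^2=\frac{n}{n+1}\cdot\frac{(n+1)\varepsilon^2}{n+3}=\frac{n\varepsilon^2}{n+3}.
\]
This value also matches \eqref{eq:moment-power} with $j=1$. The quadratic term is then exactly $\frac{\varepsilon^2}{2(n+3)}\Delta\phi(x)$.

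\textbf{The remainder.} This is the only place that needs care, and it is routine:
\[
  \Bigl|\int r\rhoeps\Bigr|\le\omega(\varepsilon)\int|h|^2\rhoeps\,dh=\omega(\varepsilon)\frac{n\varepsilon^2}{n+3}=o(\varepsilon^2).
\]
Combining the four terms yields \eqref{eq:p2-expand}.
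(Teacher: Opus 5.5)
Your argument is correct and is essentially the paper's. The paper obtains the statement by setting $k=1$ in \cref{obs:pizzetti}, whose proof runs through the same Taylor expansion, parity cancellation of the odd terms, radial reduction of the quadratic term, second-moment computation and $o(\varepsilon^2)$ remainder bound that you carry out. The differences are only in how you execute the $k=1$ case: you use the elementary isotropy identity $\int h_ih_j\rhoeps\,dh=\frac{\delta_{ij}}{n}\int|h|^2\rhoeps\,dh$ in place of the Fischer decomposition, and you get the second moment $\frac{n\varepsilon^2}{n+3}$ from the lift in \cref{lem:avg-lift} rather than from the beta integral \eqref{eq:moment-power}.
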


\begin{proof}
Take $k=1$ in \cref{obs:pizzetti}. Then
\[
  c_{0,n}=1,
  \qquad
  c_{1,n}
  =
  \frac{1}{2(n+3)}.
\]
\end{proof}

\subsection{The tilted extremal expansion}
\label{subsec:pinf}

For $\phi\in C^2(U)$ define
\begin{equation}\label{eq:Phi-def}
  \Phi_x(h,t):=\phi(x+h)+t,
  \qquad
  (h,t)\in\R^n\times\R.
\end{equation}

\begin{lemma}[Lifted representation]
\label{lem:lift-id}
If $\overline{B_\varepsilon(x)}\subset U$, then
\[
  \Sp[\phi](x)
  =
  \max_{\overline{B_\varepsilon^{n+1}(0)}}\Phi_x,
  \qquad
  \Sm[\phi](x)
  =
  \min_{\overline{B_\varepsilon^{n+1}(0)}}\Phi_x.
\]
\end{lemma}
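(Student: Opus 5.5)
The identity follows by a fiberwise optimization: for fixed $h$, we optimize $\Phi_x(h,t)=\phi(x+h)+t$ over the vertical segment of the closed ball lying above $h$, and then take the supremum or infimum over $h$.

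\emph{Step 1: describe the closed ball by its vertical fibers.} We have
\[
  \overline{B_\varepsilon^{n+1}(0)}
  =
  \bigl\{(h,t): |h|\le\varepsilon,\ |t|\le\sqrt{\varepsilon^2-|h|^2}\bigr\}.
\]
This is the closed analogue of the slicing used in \cref{lem:avg-lift}.

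\emph{Step 2: optimize in $t$.} For each $h$ with $|h|\le\varepsilon$, the map $t\mapsto\phi(x+h)+t$ is increasing. Its maximum over the fiber is attained at $t=\sqrt{\varepsilon^2-|h|^2}$, and its minimum at $t=-\sqrt{\varepsilon^2-|h|^2}$.

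\emph{Step 3: optimize in $h$.} Iterating the supremum, first over $t$ and then over $h$, gives
\[
  \sup_{\overline{B_\varepsilon^{n+1}(0)}}\Phi_x
  =
  \sup_{|h|\le\varepsilon}\Bigl(\phi(x+h)+\sqrt{\varepsilon^2-|h|^2}\Bigr)
  =
  \Sp[\phi](x),
\]
by \eqref{eq:Splus}. The infimum is handled in the same way and gives $\Sm[\phi](x)$ by \eqref{eq:Sminus}.

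\emph{Step 4: the extrema are attained.} The hypothesis $\overline{B_\varepsilon(x)}\subset U$ ensures that $\Phi_x$ is defined and continuous on the compact set $\overline{B_\varepsilon^{n+1}(0)}$. Hence the supremum and infimum are attained, so they are a maximum and a minimum, and the same holds for $\Sp[\phi](x)$ and $\Sm[\phi](x)$.

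There is no genuine obstacle here. The only point requiring care is that the supremum over the ball may be computed by iterated suprema over fibers, and this holds for any function.
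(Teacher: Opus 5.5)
Your proof is correct and follows the paper's own argument: you slice the closed ball into vertical segments $|t|\le\sqrt{\varepsilon^2-|h|^2}$ over each $h$ and use that $t\mapsto\phi(x+h)+t$ is increasing, so the fiberwise extrema sit at the endpoints. Your Step 4 adds a point the paper leaves implicit: continuity of $\phi$ on the compact ball guarantees the extrema are attained, which justifies writing $\max$ and $\min$ rather than $\sup$ and $\inf$.
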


\begin{proof}
Fix $h\in\overline{B_\varepsilon(0)}$. The condition
\[
  |h|^2+t^2\le\varepsilon^2
\]
is equivalent to
\[
  -\sqrt{\varepsilon^2-|h|^2}
  \le
  t
  \le
  \sqrt{\varepsilon^2-|h|^2}.
\]
Since
\[
  \Phi_x(h,t)=\phi(x+h)+t
\]
is increasing in $t$, the maximum is attained at
$t=\sqrt{\varepsilon^2-|h|^2}$ and the minimum at
$t=-\sqrt{\varepsilon^2-|h|^2}$. This gives the two identities.
\end{proof}

\begin{lemma}[Max--min expansion]
\label{lem:max-min}
Let $g\in C^2$ in a neighborhood of $0\in\R^m$, with $Dg(0)\ne0$.
Then, as $\varepsilon\to0$,
\begin{align}
  \max_{\overline{B_\varepsilon(0)}}g
  &=
  g(0)
  +
  \varepsilon|Dg(0)|
  +
  \frac{\varepsilon^2}{2}
  \frac{
    \langle D^2g(0)Dg(0),Dg(0)\rangle
  }{
    |Dg(0)|^2
  }
  +
  o(\varepsilon^2),
  \label{eq:maxg}\\
  \min_{\overline{B_\varepsilon(0)}}g
  &=
  g(0)
  -
  \varepsilon|Dg(0)|
  +
  \frac{\varepsilon^2}{2}
  \frac{
    \langle D^2g(0)Dg(0),Dg(0)\rangle
  }{
    |Dg(0)|^2
  }
  +
  o(\varepsilon^2).
  \label{eq:ming}
\end{align}
Consequently,
\begin{equation}\label{eq:max-min-avg}
  \frac12
  \left(
    \max_{\overline{B_\varepsilon(0)}}g
    +
    \min_{\overline{B_\varepsilon(0)}}g
  \right)
  =
  g(0)
  +
  \frac{\varepsilon^2}{2}
  \frac{
    \langle D^2g(0)Dg(0),Dg(0)\rangle
  }{
    |Dg(0)|^2
  }
  +
  o(\varepsilon^2).
\end{equation}
\end{lemma}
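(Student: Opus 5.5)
We prove the max--min expansion \eqref{eq:maxg}--\eqref{eq:ming} of \cref{lem:max-min} by matching a lower bound from a good test point with an upper bound from a localization of the maximizer. Write $\nu:=Dg(0)/|Dg(0)|$, $a:=|Dg(0)|>0$, $M:=D^2g(0)$. By Taylor's theorem,
\[
  g(z)=g(0)+\langle Dg(0),z\rangle+\tfrac12\langle Mz,z\rangle+o(|z|^2).
\]
The minimum statement follows from the maximum statement applied to $-g$, because $D(-g)(0)=-Dg(0)\ne0$ and $\langle D^2(-g)(0)D(-g)(0),D(-g)(0)\rangle/|D(-g)(0)|^2=-\langle MDg(0),Dg(0)\rangle/a^2$. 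Averaging the two expansions then gives \eqref{eq:max-min-avg} directly. So only \eqref{eq:maxg} needs proof.

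For the lower bound, evaluate $g$ at $z=\varepsilon\nu\in\overline{B_\varepsilon(0)}$:
\[
  \max g\ge g(\varepsilon\nu)=g(0)+\varepsilon a+\tfrac{\varepsilon^2}{2}\langle M\nu,\nu\rangle+o(\varepsilon^2).
\]

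For the upper bound, let $z_\varepsilon$ be a maximizer, which exists by continuity on the compact ball, and write $z_\varepsilon=\varepsilon(\cos\theta_\varepsilon\,\nu+\sin\theta_\varepsilon\,e_\varepsilon)\,r_\varepsilon$ with $e_\varepsilon\perp\nu$ a unit vector and $r_\varepsilon\in[0,1]$. The key step, and the main obstacle, is showing that $z_\varepsilon/\varepsilon\to\nu$ at a quantitative rate, namely $|z_\varepsilon-\varepsilon\nu|=O(\varepsilon^2)$. To get this, compare $g(z_\varepsilon)\ge g(\varepsilon\nu)$ using the Taylor bound $|g(z)-g(0)-\langle Dg(0),z\rangle|\le C|z|^2$. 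This yields
\[
  a\,\langle\nu,z_\varepsilon\rangle\ge\varepsilon a-2C\varepsilon^2,
  \qquad\text{so}\qquad
  \varepsilon-\langle\nu,z_\varepsilon\rangle\le C'\varepsilon^2 .
\]
Since $|z_\varepsilon|\le\varepsilon$, we then get
\[
  |z_\varepsilon-\varepsilon\nu|^2=|z_\varepsilon|^2-2\varepsilon\langle\nu,z_\varepsilon\rangle+\varepsilon^2\le2\varepsilon\bigl(\varepsilon-\langle\nu,z_\varepsilon\rangle\bigr)\le2C'\varepsilon^3 .
\]
Hence $|z_\varepsilon-\varepsilon\nu|=O(\varepsilon^{3/2})=o(\varepsilon)$, which is enough for what follows.

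With this localization in hand, expand at $z_\varepsilon$. Since $\langle Dg(0),z_\varepsilon\rangle\le a\varepsilon$ by Cauchy--Schwarz, and $\langle Mz_\varepsilon,z_\varepsilon\rangle=\varepsilon^2\langle M\nu,\nu\rangle+o(\varepsilon^2)$ because $z_\varepsilon=\varepsilon\nu+o(\varepsilon)$, we obtain
\[
  \max g=g(z_\varepsilon)\le g(0)+\varepsilon a+\tfrac{\varepsilon^2}{2}\langle M\nu,\nu\rangle+o(\varepsilon^2).
\]
This matches the lower bound, and since $\langle M\nu,\nu\rangle=\langle MDg(0),Dg(0)\rangle/|Dg(0)|^2$, it proves \eqref{eq:maxg}. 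The hypothesis $Dg(0)\neq0$ is used exactly in the localization step: it lets us divide by $a$, so the maximizer is forced into the direction $\nu$. Without it, the maximizer could lie in an arbitrary direction and the expansion would fail.
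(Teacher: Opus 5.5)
Your proof is correct, but it handles the key localization step differently from the paper. The paper notes that for small $\varepsilon$ the maximizer lies on $\partial B_\varepsilon(0)$, since $Dg\ne0$ near $0$. It compares with $\varepsilon e$ only to get $\omega_\varepsilon\to e$. It then uses the constrained critical point (Lagrange) equation $Dg(\varepsilon\omega_\varepsilon)=\lambda\omega_\varepsilon$ to upgrade this to $\omega_\varepsilon=e+O(\varepsilon)$, which gives $q\cdot z_\varepsilon=\varepsilon|q|+O(\varepsilon^3)$ and lets it evaluate $g(z_\varepsilon)$ directly. You instead sandwich the maximum. The test point $\varepsilon\nu$ gives the lower bound. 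On the upper side you never need the first-order term at the maximizer precisely, because Cauchy--Schwarz gives $\langle Dg(0),z_\varepsilon\rangle\le a\varepsilon$ for free. The comparison estimate $|z_\varepsilon-\varepsilon\nu|=O(\varepsilon^{3/2})$ then suffices for the quadratic term, since it already makes $\langle Mz_\varepsilon,z_\varepsilon\rangle-\varepsilon^2\langle M\nu,\nu\rangle=O(\varepsilon^{5/2})$. Your route avoids both the boundary claim and the multiplier equation. It also makes explicit that the remainder depends only on $\sup|D^2g|$, the modulus of continuity of $D^2g$, and a lower bound for $a$, which is exactly what the locally uniform statement in \cref{cor:uniform-consistency} needs. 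The paper's route buys sharper information about the maximizer itself, namely $z_\varepsilon=\varepsilon e+O(\varepsilon^2)$ together with a multiplier. That is the mechanism \cref{lem:max-min-quantitative} extends to the envelope identity used in the Aronsson example. One cosmetic point: you announce the localization rate $O(\varepsilon^2)$ but prove, and need, only $O(\varepsilon^{3/2})$. The polar parametrization with $\theta_\varepsilon$, $e_\varepsilon$, $r_\varepsilon$ is also never used, so both can be dropped.
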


\begin{proof}
Set
\[
  q:=Dg(0),
  \qquad
  e:=\frac{q}{|q|}.
\]
Taylor expansion gives, uniformly for $|z|\le\varepsilon$,
\begin{equation}\label{eq:Taylor-g}
  g(z)
  =
  g(0)
  +
  q\cdot z
  +
  \frac12\langle D^2g(0)z,z\rangle
  +
  o(\varepsilon^2).
\end{equation}
For small $\varepsilon$, the maximizer lies on $\partial B_\varepsilon(0)$.
Write
\[
  z_\varepsilon=\varepsilon\omega_\varepsilon,
  \qquad
  |\omega_\varepsilon|=1.
\]
Comparison with $\varepsilon e$ gives
\[
  q\cdot\omega_\varepsilon
  \ge
  |q|+O(\varepsilon).
\]
Since $q\cdot\omega_\varepsilon\le |q|$, we have
\[
  \omega_\varepsilon\to e.
\]
The constrained critical point equation on the sphere gives
\[
  \omega_\varepsilon=e+O(\varepsilon).
\]
Therefore
\[
  q\cdot z_\varepsilon
  =
  \varepsilon |q|+O(\varepsilon^3),
\]
and
\[
  \frac12\langle D^2g(0)z_\varepsilon,z_\varepsilon\rangle
  =
  \frac{\varepsilon^2}{2}
  \langle D^2g(0)e,e\rangle
  +
  o(\varepsilon^2).
\]
Substituting into \eqref{eq:Taylor-g} gives \eqref{eq:maxg}. Applying the
same argument to $-g$ gives \eqref{eq:ming}. Averaging gives
\eqref{eq:max-min-avg}.
\end{proof}

\begin{lemma}[Quantitative max--min expansion]
\label{lem:max-min-quantitative}
Let $g\in C^3$ in a neighborhood of $0\in\R^m$ with $q:=Dg(0)\ne0$, and
set
\[
  e:=\frac{q}{|q|}.
\]
Then there exist $\rho_0>0$ and $C>0$ such that, for
$0<\rho\le\rho_0$, the maximizer $z^+(\rho)$ and minimizer $z^-(\rho)$
of $g$ over $\overline{B_\rho(0)}$ are unique, lie on
$\partial B_\rho(0)$, and satisfy
\begin{equation}\label{eq:zpm-expansion}
  z^\pm(\rho)=\pm\rho e+O(\rho^2).
\end{equation}
Moreover, if
\[
  M^+(\rho):=
  \max_{\overline{B_\rho(0)}}g,
  \qquad
  M^-(\rho):=
  \min_{\overline{B_\rho(0)}}g,
\]
then
\[
  M^\pm\in C^1((0,\rho_0))
\]
and
\begin{equation}\label{eq:envelope-identity}
  (M^\pm)'(\rho)
  =
  \pm
  |Dg(z^\pm(\rho))|.
\end{equation}
Consequently, writing
\[
  R^\pm(\rho)
  :=
  M^\pm(\rho)-g(0)\mp\rho|q|
  -
  \frac{\rho^2}{2}
  \frac{\langle D^2g(0)q,q\rangle}{|q|^2},
\]
one has
\begin{equation}\label{eq:quantitative-remainder}
  |R^\pm(\rho)|\le C\rho^3,
  \qquad
  |(R^\pm)'(\rho)|\le C\rho^2.
\end{equation}
\end{lemma}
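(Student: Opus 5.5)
We prove the statement by localizing the maximizer on the sphere, treating it as a Lagrange critical point, and then reading off all the conclusions from the envelope identity; we describe the steps for $z^+$, since $z^-$ is the same argument applied to $-g$.

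\textbf{Step 1: the maximizer lies on the sphere.} Shrink to a ball $\overline{B_{2\rho_1}(0)}$ on which $g\in C^3$ and $|Dg|\ge |q|/2$. Then $g$ has no interior critical points there, so for $\rho\le\rho_1$ every maximizer of $g$ over $\overline{B_\rho(0)}$ lies on $\partial B_\rho(0)$.

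\textbf{Step 2: the maximizer points in the direction $e$.} Write $z=\rho\omega$ with $|\omega|=1$. Comparing $g(\rho\omega)$ with $g(\rho e)$ via the first-order Taylor expansion gives
\[
  q\cdot\omega\ge |q|-C\rho .
\]
Hence $|\omega-e|^2=2(1-e\cdot\omega)=O(\rho)$, so $\omega\to e$.

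\textbf{Step 3: the Lagrange equation and uniqueness.} We sharpen this using the Lagrange condition
\[
  Dg(\rho\omega)=\lambda\omega,\qquad \lambda>0 .
\]
Set up $F(\omega,\rho):=Dg(\rho\omega)-\langle Dg(\rho\omega),\omega\rangle\omega$, the tangential component of $Dg$, restricted to the sphere. We have $F(e,0)=0$, and the linearization in $\omega$ at $(e,0)$ on $T_eS^{m-1}$ is $-|q|\,\mathrm{Id}$, which is invertible. Since $F$ is $C^2$, the implicit function theorem gives a unique $C^2$ branch $\omega(\rho)$ with $\omega(0)=e$, defined on a fixed neighborhood of $e$. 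Step 2 forces every maximizer into this neighborhood for small $\rho$, so the maximizer is unique. It follows that $z^+(\rho)=\rho\omega(\rho)$ is $C^2$ in $\rho$ and $z^+(\rho)=\rho e+O(\rho^2)$, which is \eqref{eq:zpm-expansion}.

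\textbf{Step 4: the envelope identity.} Since $M^+(\rho)=g(\rho\omega(\rho))$ is a composition of $C^1$ maps, $M^+\in C^1$. Differentiating,
\[
  (M^+)'(\rho)=\langle Dg(z^+),\,\omega+\rho\omega'\rangle .
\]
Now $Dg(z^+)=\lambda\omega$, and differentiating $|\omega|^2=1$ gives $\langle\omega,\omega'\rangle=0$. Therefore $(M^+)'=\lambda=|Dg(z^+)|$, which is \eqref{eq:envelope-identity}.

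\textbf{Step 5: the remainder bounds.} We estimate the derivative of the remainder first. Expand
\[
  Dg(z^+)=q+D^2g(0)z^++O(\rho^2)=q+\rho D^2g(0)e+O(\rho^2),
\]
using Step 3. Then
\[
  |Dg(z^+)|=|q|+\rho\langle D^2g(0)e,e\rangle+O(\rho^2),
\]
since $|q+v|=|q|+e\cdot v+O(|v|^2)$. Consequently
\[
  (R^+)'(\rho)=|Dg(z^+)|-|q|-\rho\,\frac{\langle D^2g(0)q,q\rangle}{|q|^2}=O(\rho^2).
\]
Also $R^+(\rho)\to0$ as $\rho\to0$, because $M^+(\rho)\to g(0)$. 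Integrating the derivative bound from $0$ to $\rho$ gives $|R^+(\rho)|\le C\rho^3$, which is \eqref{eq:quantitative-remainder}.

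The main obstacle is uniqueness and regularity of the maximizer, Steps 2 and 3. A naive Lagrange argument only produces some critical point. The plan handles this by first confining all maximizers near $e$ with the crude comparison of Step 2, and only then invoking local uniqueness from the implicit function theorem. The $C^3$ hypothesis is what makes $F$ of class $C^2$, which gives $\omega\in C^2$ and the $O(\rho^2)$ error in the expansion of $Dg(z^+)$.
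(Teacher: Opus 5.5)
Your proposal is correct and follows essentially the same route as the paper: you localize the extremizers on the sphere, show that the extremal directions converge to $\pm e$, use the implicit function theorem on the Lagrange equation $Dg(\rho\omega)=\lambda\omega$ to get unique $C^2$ branches, derive the envelope identity $(M^\pm)'=\lambda^\pm$, and integrate the expansion of $|Dg(z^\pm)|$. The differences are cosmetic. You apply the implicit function theorem to the tangential component on the sphere rather than to the augmented system in $(\omega,\lambda)$ at $(\pm e,\pm|q|,0)$, you obtain $z^-$ by passing to $-g$, and you make explicit two points the paper leaves implicit: every maximizer is first confined near $e$ before local uniqueness is invoked, and $R^+(0^+)=0$ before integrating.
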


\begin{proof}
Since $q\ne0$, after decreasing $\rho_0$ we have $Dg\ne0$ on
$\overline{B_{\rho_0}(0)}$. Hence extrema over
$\overline{B_\rho(0)}$ occur on $\partial B_\rho(0)$.

Write $z=\rho\omega$, $|\omega|=1$. The expansion
\[
  g(\rho\omega)
  =
  g(0)+\rho q\cdot\omega+O(\rho^2)
\]
shows that maximizing directions converge to $e$, and minimizing
directions converge to $-e$.

The constrained critical point equation is
\[
  Dg(\rho\omega)=\lambda\omega,
  \qquad
  |\omega|=1.
\]
The implicit function theorem applied at
\[
  (e,|q|,0)
  \quad\text{and}\quad
  (-e,-|q|,0)
\]
gives unique $C^2$ branches
\[
  \omega^\pm(\rho)=\pm e+O(\rho).
\]
Thus
\[
  z^\pm(\rho)=\rho\omega^\pm(\rho)=\pm\rho e+O(\rho^2).
\]

Since $|\omega^\pm(\rho)|=1$,
\[
  \omega^\pm(\rho)\cdot(\omega^\pm)'(\rho)=0.
\]
Using the constrained critical point equation,
\[
  (M^\pm)'(\rho)
  =
  Dg(\rho\omega^\pm)
  \cdot
  \left(
    \omega^\pm+\rho(\omega^\pm)'
  \right)
  =
  \lambda^\pm.
\]
The multiplier is positive at the maximum and negative at the minimum for
small $\rho$, so
\[
  (M^\pm)'(\rho)
  =
  \pm|Dg(z^\pm(\rho))|.
\]

Finally,
\[
  Dg(z^\pm(\rho))
  =
  q\pm\rho D^2g(0)e+O(\rho^2).
\]
Therefore
\[
  (M^\pm)'(\rho)
  =
  \pm |q|
  +
  \rho
  \frac{\langle D^2g(0)q,q\rangle}{|q|^2}
  +
  O(\rho^2).
\]
This gives
\[
  |(R^\pm)'(\rho)|\le C\rho^2.
\]
Integrating from $0$ to $\rho$ gives
\[
  |R^\pm(\rho)|\le C\rho^3.
\]
\end{proof}

\begin{proposition}[Tilted extremal expansion]
\label{prop:pinf}
Let $U\subset\R^n$ be open, let $\phi\in C^2(U)$, and let
$x\in U$ with $\overline{B_\varepsilon(x)}\subset U$. Then
\begin{equation}\label{eq:pinf-expand}
  \frac12
  \bigl(\Sp[\phi](x)+\Sm[\phi](x)\bigr)
  =
  \phi(x)
  +
  \frac{\varepsilon^2}{2}
  \DeltaiR\phi(x)
  +
  o(\varepsilon^2).
\end{equation}
\end{proposition}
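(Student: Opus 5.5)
We reduce the statement to \cref{lem:max-min} applied in dimension $m=n+1$. Fix $x$ and take $g:=\Phi_x$ from \eqref{eq:Phi-def}, so $g(h,t)=\phi(x+h)+t$. For small $\varepsilon$, $g$ is $C^2$ on a neighborhood of $\overline{B_\varepsilon^{n+1}(0)}$, since $\overline{B_\varepsilon(x)}\subset U$ and $U$ is open. By \cref{lem:lift-id}, $\Sp[\phi](x)$ and $\Sm[\phi](x)$ are exactly the maximum and minimum of $g$ over $\overline{B_\varepsilon^{n+1}(0)}$. The left side of \eqref{eq:pinf-expand} is therefore the max--min average in \eqref{eq:max-min-avg}.

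Next we compute the data of $g$ at the origin. We have
\[
  Dg(0)=(D\phi(x),1)\neq0,
  \qquad
  |Dg(0)|^2=1+|D\phi(x)|^2 .
\]
The Hessian $D^2g(0)$ is block diagonal, with $D^2\phi(x)$ in the $h$-block and zeros in every row and column involving $t$, because $t$ enters $g$ linearly. Hence
\[
  \langle D^2g(0)Dg(0),Dg(0)\rangle=\langle D^2\phi(x)D\phi(x),D\phi(x)\rangle .
\]
Dividing by $|Dg(0)|^2$ gives exactly $\DeltaiR\phi(x)$. Substituting into \eqref{eq:max-min-avg} yields $g(0)=\phi(x)$ plus $\tfrac{\varepsilon^2}{2}\DeltaiR\phi(x)$ plus $o(\varepsilon^2)$, which is \eqref{eq:pinf-expand}.

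The only point needing care is that \cref{lem:max-min} is stated for $g$ that is $C^2$ near $0$ with $Dg(0)\ne0$, and its proof uses the uniform Taylor expansion \eqref{eq:Taylor-g} on the $(n+1)$-ball. Both hypotheses hold here. Nondegeneracy is automatic from the last component $1$ of the gradient, so no case split is needed at critical points of $\phi$. This is the main, though mild, conceptual step. If one prefers not to rely on the sketched constrained-critical-point step in \cref{lem:max-min} under mere $C^2$ regularity, one can argue directly.

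For the lower bound, evaluate $g$ at $\pm\varepsilon e$ with $e=Dg(0)/|Dg(0)|$. For the upper bound, use $\max g\le g(0)+\varepsilon|q|\,\max_\omega[\dots]$, i.e. note that any maximizer direction $\omega$ satisfies $q\cdot\omega\ge|q|-O(\varepsilon)$. This forces $|\omega-e|=O(\sqrt{\varepsilon})$, and hence $\langle D^2g(0)\omega,\omega\rangle=\langle D^2g(0)e,e\rangle+o(1)$. The corresponding upper bound then follows from $q\cdot\omega\le|q|$. The same argument applies to the minimum, and the two expansions are averaged.
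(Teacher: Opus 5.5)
Your proposal is correct and follows the paper's proof. Both use \cref{lem:lift-id} to rewrite $\Sp,\Sm$ as the max and min of $\Phi_x$ over $\overline{B_\varepsilon^{n+1}(0)}$, then apply \cref{lem:max-min} with $D\Phi_x(0,0)=(D\phi(x),1)\neq0$ and the block-diagonal Hessian, which identifies the $\varepsilon^2$-coefficient as $\widetilde\Delta_\infty\phi(x)$. Your fallback argument is also valid and proves \eqref{eq:max-min-avg} for $C^2$ data without the constrained-critical-point step: it tests $g$ at $\pm\varepsilon e$ for the lower bound, and for the upper bound uses $|\omega-e|=O(\sqrt{\varepsilon})$ together with $q\cdot\omega\le|q|$.
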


\begin{proof}
By \cref{lem:lift-id},
\[
  \Sp[\phi](x)
  =
  \max_{\overline{B_\varepsilon^{n+1}(0)}}\Phi_x,
  \qquad
  \Sm[\phi](x)
  =
  \min_{\overline{B_\varepsilon^{n+1}(0)}}\Phi_x.
\]
At the origin,
\[
  D\Phi_x(0,0)=(D\phi(x),1),
\]
and
\[
  D^2\Phi_x(0,0)
  =
  \begin{pmatrix}
    D^2\phi(x) & 0\\
    0 & 0
  \end{pmatrix}.
\]
Hence
\[
  |D\Phi_x(0,0)|^2
  =
  1+|D\phi(x)|^2,
\]
and
\[
  \langle
    D^2\Phi_x(0,0)D\Phi_x(0,0),
    D\Phi_x(0,0)
  \rangle
  =
  \langle
    D^2\phi(x)D\phi(x),
    D\phi(x)
  \rangle.
\]
Since $D\Phi_x(0,0)\ne0$, \cref{lem:max-min} applies to $g=\Phi_x$.
Thus
\[
  \frac12
  \bigl(\Sp[\phi](x)+\Sm[\phi](x)\bigr)
  =
  \phi(x)
  +
  \frac{\varepsilon^2}{2}
  \frac{
    \langle D^2\phi(x)D\phi(x),D\phi(x)\rangle
  }{
    1+|D\phi(x)|^2
  }
  +
  o(\varepsilon^2),
\]
which is \eqref{eq:pinf-expand}.
\end{proof}

\subsection{The combined expansion}
\label{subsec:general}

\begin{lemma}[Combined expansion]
\label{lem:combined}
Let $U\subset\R^n$ be open, let $\phi\in C^2(U)$, and let
$x\in U$ with $\overline{B_\varepsilon(x)}\subset U$. Then
\begin{equation}\label{eq:combined}
  T_\varepsilon\phi(x)-\phi(x)
  =
  \frac{\varepsilon^2}{2(p+n+1)}
  \Lp\phi(x)
  +
  o(\varepsilon^2).
\end{equation}
\end{lemma}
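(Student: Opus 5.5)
The plan is to combine \cref{prop:p2} and \cref{prop:pinf} linearly, using the definition \eqref{eq:proj-DPP} of $T_\varepsilon$ and the identity $\talpha+\tbeta=1$. Write
\[
  T_\varepsilon\phi(x)-\phi(x)
  =
  \talpha\Bigl(\tfrac12\bigl(\Sp[\phi](x)+\Sm[\phi](x)\bigr)-\phi(x)\Bigr)
  +
  \tbeta\bigl(A_\varepsilon\phi(x)-\phi(x)\bigr).
\]
This step is legitimate precisely because $\talpha+\tbeta=1$, which lets us split $\phi(x)=\talpha\phi(x)+\tbeta\phi(x)$.

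Next, substitute the two expansions. By \cref{prop:pinf}, the first bracket equals $\frac{\varepsilon^2}{2}\DeltaiR\phi(x)+o(\varepsilon^2)$. By \cref{prop:p2}, the second equals $\frac{\varepsilon^2}{2(n+3)}\Delta\phi(x)+o(\varepsilon^2)$. Since $\talpha,\tbeta$ are fixed constants, independent of $\varepsilon$, the remainders combine to $o(\varepsilon^2)$.

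Then compute the coefficients:
\[
  \tbeta\cdot\frac{1}{2(n+3)}=\frac{1}{2(p+n+1)},
  \qquad
  \talpha\cdot\frac12=\frac{p-2}{2(p+n+1)}.
\]
Summing the two contributions gives
\[
  \frac{\varepsilon^2}{2(p+n+1)}\bigl(\Delta\phi(x)+(p-2)\DeltaiR\phi(x)\bigr)
  =
  \frac{\varepsilon^2}{2(p+n+1)}\Lp\phi(x),
\]
by \eqref{eq:L-operator}.

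There is essentially no obstacle; the substantive work has already been done in \cref{prop:pinf}. The one point that needs checking is that \cref{prop:pinf} applies at every $x$ with no case split. This holds because the lifted gradient $(D\phi(x),1)$ never vanishes, so \cref{lem:max-min} is valid even at critical points of $\phi$. It is also worth noting that when $1<p<2$ the weight $\talpha$ is negative, but this causes no difficulty: the argument is purely an identity between expansions, and no monotonicity is used.
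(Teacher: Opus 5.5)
Your proposal is correct and matches the paper's proof: both substitute \cref{prop:pinf} and \cref{prop:p2} into \eqref{eq:proj-DPP}, use $\talpha+\tbeta=1$ to cancel the $\phi(x)$ terms, and simplify with $\tbeta/(n+3)=1/(p+n+1)$ to get $\frac{\varepsilon^2}{2(p+n+1)}\Lp\phi(x)$. The only difference is cosmetic: you split $\phi(x)=\talpha\phi(x)+\tbeta\phi(x)$ before substituting, while the paper does it afterwards.
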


\begin{proof}
By \cref{prop:pinf},
\[
  \frac12
  \bigl(\Sp[\phi](x)+\Sm[\phi](x)\bigr)
  =
  \phi(x)
  +
  \frac{\varepsilon^2}{2}
  \DeltaiR\phi(x)
  +
  o(\varepsilon^2).
\]
By \cref{prop:p2},
\[
  \int_{B_\varepsilon(0)}
  \phi(x+h)\rhoeps(h)\,dh
  =
  \phi(x)
  +
  \frac{\varepsilon^2}{2(n+3)}
  \Delta\phi(x)
  +
  o(\varepsilon^2).
\]
Substituting into \eqref{eq:proj-DPP},
\[
  T_\varepsilon\phi(x)
  =
  \talpha
  \left(
    \phi(x)
    +
    \frac{\varepsilon^2}{2}
    \DeltaiR\phi(x)
  \right)
  +
  \tbeta
  \left(
    \phi(x)
    +
    \frac{\varepsilon^2}{2(n+3)}
    \Delta\phi(x)
  \right)
  +
  o(\varepsilon^2).
\]
Since $\talpha+\tbeta=1$,
\[
  T_\varepsilon\phi(x)-\phi(x)
  =
  \frac{\varepsilon^2}{2}
  \left(
    \talpha\DeltaiR\phi(x)
    +
    \frac{\tbeta}{n+3}\Delta\phi(x)
  \right)
  +
  o(\varepsilon^2).
\]
Using
\[
  \talpha=\frac{p-2}{p+n+1},
  \qquad
  \frac{\tbeta}{n+3}
  =
  \frac{1}{p+n+1},
\]
we get
\[
  T_\varepsilon\phi(x)-\phi(x)
  =
  \frac{\varepsilon^2}{2(p+n+1)}
  \left(
    \Delta\phi(x)
    +
    (p-2)\DeltaiR\phi(x)
  \right)
  +
  o(\varepsilon^2).
\]
This is \eqref{eq:combined}.
\end{proof}

\begin{corollary}[Locally uniform consistency]
\label{cor:uniform-consistency}
Let $U\subset\R^n$ be open and let $K\Subset U$. Then, for every
$\phi\in C^2(U)$,
\[
  T_\varepsilon\phi(x)-\phi(x)
  =
  \frac{\varepsilon^2}{2(p+n+1)}
  \Lp\phi(x)
  +
  o(\varepsilon^2)
\]
uniformly for $x\in K$.
\end{corollary}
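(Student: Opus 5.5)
The plan is to redo the two pointwise expansions behind \cref{lem:combined} with remainders controlled uniformly in $x\in K$. Since $\phi$ is only $C^2$, the natural tool is the modulus of continuity of $D^2\phi$ on a compact neighbourhood of $K$. Fix $\delta>0$ with $K_\delta:=\{y:\dist(y,K)\le\delta\}\subset U$. Let $\omega$ be the modulus of continuity of $D^2\phi$ on $K_\delta$, and set $M:=\sup_{K_\delta}(|D\phi|+|D^2\phi|)$. For $\varepsilon<\delta$, Taylor's theorem with integral remainder gives, for all $x\in K$ and $|h|\le\varepsilon$,
\[
  \phi(x+h)=\phi(x)+D\phi(x)\cdot h+\tfrac12\langle D^2\phi(x)h,h\rangle+r(x,h),
  \qquad |r(x,h)|\le \tfrac12\omega(\varepsilon)|h|^2 .
\]
This bound is independent of $x\in K$.

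\textbf{Averaging term.} Integrate the Taylor expansion against $\rhoeps$. Exactly as in the proof of \cref{obs:pizzetti}, the linear term vanishes by symmetry and the quadratic term equals $\frac{\varepsilon^2}{2(n+3)}\Delta\phi(x)$. The remainder is bounded by $\tfrac12\omega(\varepsilon)\int|h|^2\rhoeps\le C\omega(\varepsilon)\varepsilon^2$, uniformly on $K$. This gives \cref{prop:p2} with a uniform $o(\varepsilon^2)$.

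\textbf{Extremal term.} This is the main step, because the argument in \cref{lem:max-min} locates the maximizer via an implicit-function/compactness argument at a single point. I would avoid locating the maximizer precisely and use a two-sided comparison instead. Write $q=q(x)=(D\phi(x),1)$ and $e=q/|q|$. Note that $|q|\ge1$ and $|q|\le\sqrt{1+M^2}$ uniformly in $x$; the lifted gradient never vanishes, which is exactly what makes this uniform. Let $Q=D^2\Phi_x(0,0)$, with $|Q|\le M$.

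For the \emph{lower bound}, evaluate at $z=\varepsilon e$:
\[
  \max\Phi_x\ge \phi(x)+\varepsilon|q|+\tfrac{\varepsilon^2}{2}\langle Qe,e\rangle-\tfrac12\omega(\varepsilon)\varepsilon^2 .
\]

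For the \emph{upper bound}, take any $z=\varepsilon\sigma$ with $|\sigma|\le1$ and write $\sigma=a e+\sigma^\perp$. Then
\[
  \Phi_x(z)\le \phi(x)+\varepsilon a|q|+\tfrac{\varepsilon^2}{2}\langle Q\sigma,\sigma\rangle+\tfrac12\omega(\varepsilon)\varepsilon^2 .
\]
Since $a\le\sqrt{1-|\sigma^\perp|^2}\le1-\tfrac12|\sigma^\perp|^2$, the linear term carries a penalty $-\tfrac{\varepsilon}{2}|q||\sigma^\perp|^2\le-\tfrac{\varepsilon}{2}|\sigma^\perp|^2$. Meanwhile
\[
  \langle Q\sigma,\sigma\rangle-\langle Qe,e\rangle\le C M\bigl(|\sigma^\perp|+(1-a)\bigr).
\]
Maximizing $-\tfrac{\varepsilon}{2}s^2+\tfrac{\varepsilon^2}{2}CMs$ over $s\ge0$ gives at most $C'\varepsilon^3$, uniformly in $x$. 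This yields \eqref{eq:maxg} with remainder $\tfrac12\omega(\varepsilon)\varepsilon^2+C'\varepsilon^3$. The same argument applies to the minimum via $-\Phi_x$. Averaging gives \cref{prop:pinf} with a uniform $o(\varepsilon^2)$.

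\textbf{Conclusion.} Combining the two uniform expansions with the algebra of \cref{lem:combined} (using $\talpha+\tbeta=1$ and $\tbeta/(n+3)=1/(p+n+1)$) gives the claim with a remainder bounded by $C(\omega(\varepsilon)+\varepsilon)\varepsilon^2$ on $K$. The only real obstacle is the uniformity of the extremal expansion. It is handled by the lower bound $|q|\ge1$ on the lifted gradient, which keeps the curvature penalty in the transverse direction uniformly strong.
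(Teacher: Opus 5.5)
Your argument is correct and has the same skeleton as the paper's proof. You split $T_\varepsilon\phi-\phi$ into the averaging part and the tilted extremal part. Uniformity of the averaging part comes from the modulus of continuity of $D^2\phi$ on a compact neighbourhood of $K$, and uniformity of the extremal part from $|(D\phi(x),1)|\ge1$.

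The difference lies in the extremal step. The paper's proof is a two-line remark that the remainders in \cref{prop:p2} and \cref{prop:pinf} are locally uniform. Implicitly it relies on tracking constants through \cref{lem:max-min}, which places the maximizer on the sphere and uses the Lagrange condition to get $\omega_\varepsilon=e+O(\varepsilon)$, with the $O(\varepsilon)$ controlled by $\sup|D^2\phi|/|q|$. You never locate the maximizer. The test point $\varepsilon e$ gives the lower bound, and the decomposition $\sigma=ae+\sigma^\perp$ with a transverse penalty gives the upper bound. This is more elementary, since it needs no boundary-maximum or multiplier argument. It yields an explicit remainder $C(\omega(\varepsilon)+\varepsilon)\varepsilon^2$, with $C$ depending only on $n$, $p$ and $\sup_{K_\delta}(|D\phi|+|D^2\phi|)$. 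It also shows exactly where $|q|\ge1$ enters. The paper's route is shorter but leaves this bookkeeping to the reader.

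One step needs tightening. You bound $\langle Q\sigma,\sigma\rangle-\langle Qe,e\rangle$ by $CM(|\sigma^\perp|+(1-a))$, but then maximize only over $s=|\sigma^\perp|$, using the weakened penalty $-\frac{\varepsilon}{2}|q||\sigma^\perp|^2$. That penalty does not control the term $\frac{\varepsilon^2}{2}CM(1-a)$, which is of order $\varepsilon^2$ for, say, $\sigma=ae$ with $a<1$ fixed and $\sigma^\perp=0$. Keep the full linear penalty $-\varepsilon(1-a)|q|$ instead and split it in half:
\begin{itemize}
\item One half absorbs $\frac{\varepsilon^2}{2}CM(1-a)$ as soon as $\varepsilon CM\le|q|$. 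This holds for $\varepsilon\le 1/(CM)$ uniformly in $x$, because $|q|\ge1$.
\item The other half is at least $\frac{\varepsilon}{4}|q||\sigma^\perp|^2\ge\frac{\varepsilon}{4}|\sigma^\perp|^2$. Maximizing $-\frac{\varepsilon}{4}s^2+\frac{\varepsilon^2}{2}CMs$ over $s\ge0$ then gives $\frac{C^2M^2}{4}\varepsilon^3$.
\end{itemize}
With this one-line repair the proof is complete.
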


\begin{proof}
The averaging remainder is locally uniform by the continuity of
$D^2\phi$ on compact subsets of $U$. The tilted extremal remainder is
locally uniform because
\[
  D\Phi_x(0,0)=(D\phi(x),1)
\]
has norm bounded below by $1$ uniformly in $x$.
\end{proof}

\begin{proof}[Proof of \cref{thm:main}]
Let $v\in C^2(\Omega)$ and fix $x\in\Omega$. Choose $\varepsilon>0$ small
enough that
\[
  \overline{B_\varepsilon(x)}\subset\Omega.
\]
By \cref{lem:combined},
\begin{equation}\label{eq:proof-main-combined}
  T_\varepsilon v(x)-v(x)
  =
  \frac{\varepsilon^2}{2(p+n+1)}
  \Lp v(x)
  +
  o(\varepsilon^2).
\end{equation}

If $\Reg_p(v)=0$, then by \eqref{eq:decomp},
\[
  (1+|Dv|^2)^{p/2-1}\Lp v=0.
\]
Since the prefactor is positive, $\Lp v=0$. Hence
\[
  T_\varepsilon v(x)-v(x)=o(\varepsilon^2),
\]
which is \eqref{eq:amv}.

Conversely, suppose \eqref{eq:amv} holds. Then
\[
  T_\varepsilon v(x)-v(x)=o(\varepsilon^2).
\]
Combining this with \eqref{eq:proof-main-combined},
\[
  o(\varepsilon^2)
  =
  \frac{\varepsilon^2}{2(p+n+1)}
  \Lp v(x)
  +
  o(\varepsilon^2).
\]
Dividing by $\varepsilon^2$ and letting $\varepsilon\to0$ gives
\[
  \Lp v(x)=0.
\]
Since $x$ was arbitrary, $\Lp v=0$ in $\Omega$. By \eqref{eq:decomp},
\[
  \Reg_p(v)=0.
\]
The weak-solution statement follows from the regularity recalled in
\cref{subsec:operator}.
\end{proof}

\section{A pointwise endpoint example}
\label{sec:endpoint}

The following example shows that the formal endpoint identity
\eqref{eq:pinf-charact} does not extend as a pointwise identity to a
standard nonsmooth infinity-harmonic function.

\begin{example}[The projected endpoint fails pointwise]
\label{ex:aronsson}
Consider Aronsson's function
\begin{equation}\label{eq:aronsson}
  u(x,y)=|x|^{4/3}-|y|^{4/3}.
\end{equation}
This function is infinity harmonic in the viscosity sense
\cite{Aronsson84}, but it is not $C^2$ at $(1,0)$, where
\[
  Du(1,0)=\left(\frac43,0\right)\ne0.
\]
We show that \eqref{eq:pinf-charact} fails at $(1,0)$.

By \cref{lem:lift-id}, $\Sp[u](1,0)$ and $\Sm[u](1,0)$ are the maximum
and minimum, over
\[
  h_1^2+h_2^2+t^2\le\varepsilon^2,
\]
of
\[
  \Phi(h_1,h_2,t)
  =
  (1+h_1)^{4/3}-|h_2|^{4/3}+t.
\]
Near the origin,
\[
  (1+h_1)^{4/3}
  =
  1+\frac43h_1+\frac29h_1^2+O(h_1^3).
\]

For the supremum, the term $-|h_2|^{4/3}$ is maximized at $h_2=0$.
Thus we maximize
\[
  g(h_1,t)=(1+h_1)^{4/3}+t
\]
over
\[
  h_1^2+t^2\le\varepsilon^2.
\]
At the origin,
\[
  Dg(0)=\left(\frac43,1\right),
  \qquad
  |Dg(0)|=\frac53,
\]
and
\[
  g_{h_1h_1}(0)=\frac49.
\]
Hence
\[
  \frac{\langle D^2g(0)Dg(0),Dg(0)\rangle}{|Dg(0)|^2}
  =
  \frac{\frac49\left(\frac43\right)^2}{\frac{25}{9}}
  =
  \frac{64}{225}.
\]
By \cref{lem:max-min},
\[
  \Sp[u](1,0)
  =
  1+\frac53\varepsilon+\frac{32}{225}\varepsilon^2
  +
  o(\varepsilon^2).
\]

For the infimum, write $s:=|h_2|$ and
\[
  r=\sqrt{\varepsilon^2-s^2}.
\]
For fixed $s$, the remaining minimization in $(h_1,t)$ is over
\[
  h_1^2+t^2\le r^2.
\]
Let
\[
  M(r)
  :=
  \min_{h_1^2+t^2\le r^2}g(h_1,t).
\]
By \cref{lem:max-min-quantitative},
\[
  M(r)
  =
  1-\frac53r+\frac{32}{225}r^2+R(r),
\]
where
\[
  |R(r)|\le Cr^3,
  \qquad
  |R'(r)|\le Cr^2.
\]
Therefore
\[
  \Sm[u](1,0)
  =
  \min_{0\le s\le\varepsilon}
  \left[
    M\!\left(\sqrt{\varepsilon^2-s^2}\right)-s^{4/3}
  \right].
\]
For $s=o(\varepsilon)$,
\[
  \sqrt{\varepsilon^2-s^2}
  =
  \varepsilon-\frac{s^2}{2\varepsilon}
  +
  O\!\left(\frac{s^4}{\varepsilon^3}\right).
\]
Thus
\[
  M\!\left(\sqrt{\varepsilon^2-s^2}\right)-s^{4/3}
  =
  1-\frac53\varepsilon+\frac{32}{225}\varepsilon^2
  +
  \frac{5}{6\varepsilon}s^2
  -
  s^{4/3}
  +
  O\!\left(\frac{s^4}{\varepsilon^3}+s^2+\varepsilon^3\right).
\]
The leading $s$-dependent part is
\[
  H(s)
  =
  \frac{5}{6\varepsilon}s^2-s^{4/3}.
\]
Solving $H'(s)=0$ gives
\[
  s_\varepsilon
  =
  \left(\frac45\varepsilon\right)^{3/2}.
\]
At this point,
\[
  H(s_\varepsilon)
  =
  -\frac{16}{75}\varepsilon^2.
\]
A localization argument using the bound on $R'$ shows that the minimizer
lies at this scale. Hence
\[
  \Sm[u](1,0)
  =
  1-\frac53\varepsilon
  +
  \frac{32}{225}\varepsilon^2
  -
  \frac{16}{75}\varepsilon^2
  +
  o(\varepsilon^2).
\]
Since
\[
  \frac{16}{75}=\frac{48}{225},
\]
we get
\[
  \Sm[u](1,0)
  =
  1-\frac53\varepsilon
  -
  \frac{16}{225}\varepsilon^2
  +
  o(\varepsilon^2).
\]
Therefore
\[
  \frac12
  \bigl(\Sp[u](1,0)+\Sm[u](1,0)\bigr)
  -
  u(1,0)
  =
  \frac{8}{225}\varepsilon^2
  +
  o(\varepsilon^2).
\]
Hence
\[
  \lim_{\varepsilon\to0^+}
  \frac{
    \frac12
    \bigl(\Sp[u](1,0)+\Sm[u](1,0)\bigr)
    -
    u(1,0)
  }{\varepsilon^2}
  =
  \frac{8}{225}
  \ne0.
\]
Thus the pointwise endpoint identity \eqref{eq:pinf-charact} fails at
$(1,0)$.
\end{example}

\section{Concluding remarks}
\label{sec:concl}

\subsection*{An inhomogeneous identity}
The combined expansion \eqref{eq:combined} suggests an inhomogeneous
counterpart. For the normalized equation $\Lp v=f$ with $f$ continuous, the
same computation gives the formal identity
\[
  v(x)=T_\varepsilon v(x)-\frac{\varepsilon^2}{2(p+n+1)}f(x)+o(\varepsilon^2),
\]
and, equivalently, for the divergence-form equation $\Reg_p(v)=f$,
\[
  v(x)=T_\varepsilon v(x)
       -\frac{\varepsilon^2}{2(p+n+1)}(1+|Dv(x)|^2)^{1-p/2}f(x)
       +o(\varepsilon^2),
\]
the gradient-dependent factor $(1+|Dv|^2)^{1-p/2}$ coming from the
decomposition \eqref{eq:decomp}. Reading these identities in reverse, as in
\cref{thm:main}, characterizes classical solutions of the corresponding
inhomogeneous equations through the local operator $T_\varepsilon$.

\subsection*{The range \texorpdfstring{$1<p<2$}{1<p<2}}
The characterization \cref{thm:main} holds on the full range
$1<p<\infty$. The reason is that the tilted extremal expansion
\eqref{eq:pinf-expand} is a two-sided \emph{equality} for smooth $\phi$,
valid at every gradient because the lifted gradient
$D\Phi_x(0,0)=(D\phi(x),1)$ never vanishes; the combined expansion
\eqref{eq:combined} is then an algebraic identity in which the sign of
$p-2$ enters only as the coefficient $\talpha$, and the proof of
\cref{thm:main} nowhere uses monotonicity of $T_\varepsilon$. In the
nonsmooth viscosity setting, where the extremal term is available only as
a one-sided bound, the same conclusion is reached by selecting the
minimum-point expansion when $p\ge2$ and the maximum-point expansion when
$1<p<2$; this is the device of Manfredi, Parviainen, and Rossi
\cite{MPRparabolic}, in which reversing the sign of $\alpha$ reverses the
inequality but the complementary bound reverses with it, so both bounds
persist. In our smooth setting this selection is unnecessary, and the
characterization is genuinely uniform in $p$.

The sign of $p-2$ does, however, distinguish the two ranges once one passes
from the pointwise identity to the \emph{operator} $T_\varepsilon$ as a
scheme. For $p\ge2$ the weight $\talpha=(p-2)/(p+n+1)$ is nonnegative and
$T_\varepsilon$ is monotone, so it is a genuine dynamic programming
operator; this is the range in which convergence of the projected scheme
is established in the companion paper \cite{Ramezanzadeh2026}. For $1<p<2$ the
weight $\talpha<0$, so $T_\varepsilon$ assigns a negative coefficient to
the extremal term $\tfrac12(\Sp+\Sm)$ and is no longer monotone in the
naive sense, and the discrete comparison principle underlying the game
value does not apply directly. A route around this obstruction is again
supplied by \cite{MPRparabolic}: writing the limiting operator through the
ordered eigenvalues $\lambda_{\max}\bigl((p-2)D^2\phi\bigr)$ and
$\lambda_{\min}\bigl((p-2)D^2\phi\bigr)$---with the factor $p-2$ kept
inside the eigenvalue, so that it swaps the extreme eigenvalues when
$p<2$---gives a formulation of the comparison principle uniform across
$1<p<\infty$, the singular case following from
$(p-1)\lambda_{\min}+\sum_{\lambda_i\ne\lambda_{\min}}\lambda_i\le0$. We do
not pursue the sub-quadratic convergence theory here; it is the subject of
the companion paper \cite{Ramezanzadeh2026}.

\bibliographystyle{amsplain}
\bibliography{Second_paper_tug_of_war/ref}
\end{document}